\documentclass{proc-l}

\usepackage{amssymb}

\newcommand{\ds}{\displaystyle}
\newcommand{\R}{{\mathbb{R}}}
\newcommand{\bxi}{{\boldsymbol\xi}}
\newcommand{\ttt}{{\boldsymbol{\tau}}}
\newcommand{\e}{{\boldsymbol{e}}}
\newcommand{\h}{{\boldsymbol{h}}}
\renewcommand{\k}{{\boldsymbol{k}}}
\renewcommand{\u}{{\boldsymbol{u}}}
\renewcommand{\a}{{\boldsymbol{a}}}
\newcommand{\x}{{\boldsymbol{x}}}
\newcommand{\y}{{\boldsymbol{y}}}
\newcommand{\w}{{\boldsymbol{w}}}
\renewcommand{\v}{{\boldsymbol{v}}}
\newcommand{\vi}{{\boldsymbol{v}_i}}
\newcommand{\ed}{{\boldsymbol{e}_d}}
\newcommand{\ei}{{\boldsymbol{e}_i}}
\newcommand{\ej}{{\boldsymbol{e}_j}}
\newcommand{\pp}{{\mathcal{P}}}
\newcommand{\prd}{\pp_{r,d}}
\renewcommand{\pod}{\pp_{1,d}}
\newcommand{\pro}{\pp_{r,1}}
\newcommand{\poo}{\pp_{1,1}}
\newcommand{\eps}{\varepsilon}
\def\be{\begin{equation}}
\def\ee{\end{equation}}

\newtheorem{theorem}{Theorem}[section]
\newtheorem{lemma}[theorem]{Lemma}
\newtheorem{corollary}[theorem]{Corollary}

\theoremstyle{definition}

\theoremstyle{remark}
\newtheorem{remark}[theorem]{Remark}

\numberwithin{equation}{section}

\begin{document}

\title{Discrete $d$-dimensional moduli of smoothness}

\author{Z. Ditzian}
\address{Department of Mathematical and Statistical Sciences, University of Alberta, Edmonton, AB, T6G2G1, Canada}
\email{zditzian@math.ualberta.ca}

\author{A. Prymak}
\address{Department of Mathematics, University of Manitoba, Winnipeg, MB, R3T2N2, Canada}
\email{prymak@gmail.com}
\thanks{The second author was supported by NSERC of Canada.}

\subjclass[2010]{Primary 26B35. Secondary 41A05, 41A15, 41A25, 41A40, 41A63.}

\keywords{Moduli of smoothness, diadic mesh, tensor product splines, Lagrange interpolation.}

\date{}

\commby{Walter Van Assche}

\begin{abstract}
We show that on the $d$-dimensional cube $I^d\equiv [0,1]^d$ the discrete moduli of smoothness which use only the values of the function on a diadic mesh are sufficient to determine the moduli of smoothness of that function. As an important special case our result implies for $f\in C(I^d)$ and given integer $r$ that when $0<\alpha<r$, the condition
\[
\Bigl|\Delta^r_{2^{-n}\ei}f\bigl(\frac{k_1}{2^n},\dots,\frac{k_d}{2^n}\bigr)\Bigr|\le M2^{-n\alpha}
\]
for integers $1\le i\le d$, $0\le k_i\le 2^n-r$, $0\le k_j\le 2^n$  when $j\ne i$, and $n=1,2,\dots$\\
is equivalent to
\[
\Bigl|\Delta^r_{h\e}f(\bxi)\Bigr|\le M_1 h^\alpha\quad\textrm{for }\bxi,\e\in\R^d,\ h>0
\textrm{ and }|\e|=1\textrm{ such that }\bxi,\bxi+rh\e\in I^d.
\]
\end{abstract}

\maketitle

\section{Introduction}
The $r$-th modulus of smoothness on $C(I^d)$, the space of functions continuous on $I^d$, $\omega^r(f,t)$ is given by
\be\label{1.1}
\omega^r(f,t)=\sup_{|\h|\le t}\max_{\x,\x+r\h\in I^d}\bigl|\Delta^r_\h f(\x)\bigr|,
\ee
where
\be
\Delta^r_\h f(\x)\equiv \sum_{k=0}^r(-1)^{r-k} {r\choose k} f(\x+k\h), \quad \Delta_\h f(\x)\equiv \Delta^1_\h f(\x),
\ee
and where ``$\equiv$'' stands for ``by definition''. Clearly, it is desirable to use information on $f(\x)$ at fewer points $\x$. The moduli of smoothness of a function play an important role in the investigation of the rate at which various approximation processes converge. Moreover, the data available is often of a discrete nature, and hence, we believe that it will be helpful to measure smoothness using only such data. In this paper it will be shown that the values of $f$ at the points $(\frac{k_1}{2^n},\frac{k_2}{2^n},\dots,\frac{k_d}{2^n})$ are sufficient to give us information on $\omega^r(f,t)$. Our main result is:
\begin{theorem}\label{thm1.1}
Suppose $f\in C(I^d)$ and $\k\equiv(k_1,\dots,k_d)$
\be\label{1.3}
\sup_{i,\k}\Bigl\{\bigl|
\Delta^r_{2^{-n}\ei}f(2^{-n}\k)
\bigr|:
2^{-n}\k,2^{-n}(\k+r\ei)\in I^d\Bigr\}
\equiv\Psi_r(f,n)\equiv\Psi_r(n),
\ee
where $\ei$ is the unit vector in the $i$-th direction, and $\k$ has integer components.
Then
\be\label{1.4}
\bigl|\Delta^r_{t\ei}f(\u)\bigr|\le M_1(r,d) \sum_{k=0}^\infty \Psi_r(n+k),
\quad\textrm{for }\u,\u+rt\ei\in I^d,
\textrm{ and }t\le 2^{-n},
\ee
\be\label{1.5}
\omega^r(f,t)\le M_2(r,d) \Bigl(\sum_{k=0}^\infty\Psi_r(n+k)+\sum_{k=1}^{n_0}2^{kr}\Psi_r(n-k)+t^r\|f\|\Bigr)
\textrm{ for }2^{-n-1}<t\le 2^{-n},
\ee
where $n_0$ is the largest integer satisfying $r2^{n_0-n-1}\le1$, and
\be\label{1.55}
\omega^1(f,t)\equiv\omega(f,t)\le M \sum_{k=0}^\infty\Psi_1(n+k)
\quad\textrm{for }t\le 2^{-n},
\ee
where $M_1(r,d)$, $M_2(r,d)$ and $M$ are independent of $n$ and $f$.
\end{theorem}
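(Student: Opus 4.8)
The plan is to approximate $f$ uniformly by tensor‑product local Lagrange interpolants on the dyadic meshes and to pass information between scales by a telescoping argument. For each $m$ with $2^m\ge r$ let $S_m$ be a local Lagrange interpolation operator on $C[0,1]$: on $[j2^{-m},(j+1)2^{-m}]$ it returns the polynomial of degree $<r$ interpolating the function at the $r$ mesh points nearest to that interval (which include its two endpoints, with one‑sided stencils near $0$ and $1$; when $r=1$ take $S_m$ to be piecewise linear). Then $S_m$ is bounded on $C[0,1]$ uniformly in $m$, reproduces polynomials of degree $<r$, is continuous and interpolatory at the mesh points, and its value at any point is a bounded combination of the numbers $f(j2^{-m})$. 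Let $L_m=S_m\otimes\cdots\otimes S_m$ act in the $d$ coordinates; it has all the analogous properties, uses only the values $f(2^{-m}\k)$, interpolates $f$ at every level‑$m$ mesh point, is continuous, converges uniformly to $f$, and its restriction to any coordinate line is a $1$‑dimensional degree‑$<r$ spline on the level‑$m$ mesh. The basic estimate is the one‑level bound $\|L_{m+1}f-L_mf\|\le C(r,d)\bigl(\Psi_r(m)+\Psi_r(m+1)\bigr)$: since $L_{m+1}f-L_mf$ is locally a bounded combination of the values of $f$ at mesh points of levels $m$ and $m+1$ and both $L_m,L_{m+1}$ reproduce coordinate degree $<r$, this combination annihilates such polynomials; using $I-\bigotimes_jS_m=\sum_{\emptyset\ne T}(-1)^{|T|+1}\bigotimes_{j\in T}(I-S_m)$ and the $1$‑dimensional fact that a remainder $(I-S_m)g$ at a mesh point is an $O(1)$‑combination of differences $\Delta^r_{2^{-m}}g$ (or $\Delta^r_{2^{-m-1}}g$) at nearby mesh points, one writes $L_{m+1}f-L_mf$ at any mesh point as an $O(1)$‑combination of iterated axis differences $\Delta^r_{2^{-m-1}\e_{i_1}}\cdots\Delta^r_{2^{-m-1}\e_{i_s}}f$ at mesh points of levels $m,m+1$, each $\le 2^{r(s-1)}\bigl(\Psi_r(m)+\Psi_r(m+1)\bigr)$ by pulling out one axis difference at a time. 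Telescoping gives $\|f-L_mf\|\le C\sum_{k\ge0}\Psi_r(m+k)$.

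For \eqref{1.4}: given $\u,\u+rt\ei\in I^d$ with $t\le2^{-n}$, pick $m\ge n$ with $2^{-m-1}<t\le2^{-m}$ and split $\Delta^r_{t\ei}f=\Delta^r_{t\ei}(f-L_mf)+\Delta^r_{t\ei}L_mf$. The first term is $\le2^r\|f-L_mf\|\le C\sum_{k\ge0}\Psi_r(n+k)$. For the second, restrict $L_mf$ to the line through $\u$ in direction $\ei$: it is a degree‑$<r$ spline on the level‑$m$ mesh, so a step‑$t$ $r$‑th difference of it with $t\le2^{-m}$ is controlled by the jumps of its $(r-1)$‑st derivative, hence by the values $\Delta^r_{2^{-m}\ei}(L_mf)$ at level‑$m$ mesh points near $\u$. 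The observation that removes the apparent circularity is that $\Delta^r_{2^{-m}\ei}(L_mf)(\y)$, whenever $\y$ has $i$‑th coordinate equal to a level‑$m$ node, is an $O(1)$‑combination of the values $\Delta^r_{2^{-m}\ei}f$ \emph{at fully dyadic level‑$m$ mesh points}: expanding $L_mf$ in its Lagrange form, the $\ei$‑difference does not touch the other variables, $S_m$ is interpolatory at the nodes so the $i$‑th coordinate is forced onto the mesh, and the remaining coordinates are already mesh nodes; hence this is $\le C\Psi_r(m)\le C\sum_{k\ge0}\Psi_r(n+k)$. This yields \eqref{1.4}. For \eqref{1.55} one uses in addition that $\Delta_\h f(\x)$ telescopes along the staircase path $\x,\x+h_1\e_1,\x+h_1\e_1+h_2\e_2,\dots$, whose vertices lie in $I^d$ since $I^d$ is a box, so $\omega^1(f,t)\le\sum_{j=1}^d\sup\{|\Delta_{s\ej}f(\x)|:\x,\x+s\ej\in I^d,\ |s|\le t\}$, and each summand is $\le C\sum_{k\ge0}\Psi_1(n+k)$ by \eqref{1.4}.

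For \eqref{1.5} with $2^{-n-1}<t\le2^{-n}$, let $n_1$ be the least level with $2^{n_1}\ge r$ and bound
\[
\omega^r(f,t)\le 2^r\|f-L_{n_1}f\|+\omega^r(L_{n_1}f,t)+\sum_{\ell=n_1}^{n-1}\omega^r\bigl(L_{\ell+1}f-L_\ell f,t\bigr).
\]
The first term gives $\sum_{k\ge0}\Psi_r(n+k)$ together with coarse terms $\Psi_r(n-k)$. For the remaining two terms one uses Bernstein‑type inequalities for piecewise polynomials, $\|\partial^\alpha S\|\le C2^{jr}\|S\|$ for $|\alpha|=r$ and $S$ of level $j$ (here the continuity of $L_m$ is used), scale by scale: for a single layer $L_{\ell+1}f-L_\ell f$ and a general direction $\h$ with $|\h|\le t$, expand $\Delta^r_\h$ into mixed differences $\Delta^{j_1}_{h_1\e_1}\cdots\Delta^{j_d}_{h_d\e_d}$ with $j_1+\dots+j_d=r$ (the terms with some $j_i=r$ reducing to the axis estimate of \eqref{1.4}) and estimate each by Taylor's formula and the Bernstein inequality at level $\ell+1$; since $|h_i|\le t$ this produces a factor $t^r$ times powers of $2^{(\ell+1)r}$, whose sum over $\ell$ yields the coarse‑scale terms $\sum_{k=1}^{n_0}2^{kr}\Psi_r(n-k)$, while the analogous estimate for the single polynomial layer $L_{n_1}f$ yields the $t^r\|f\|$ term.

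I expect the main obstacle to be precisely the tension between the \emph{purely axis‑directed} discrete data $\Psi_r$ and \emph{arbitrary‑direction} differences. In \eqref{1.4} and \eqref{1.55} it is overcome cleanly by the observation that a coordinate $r$‑th difference of a tensor‑product interpolant samples $f$ only on the full dyadic mesh; in \eqref{1.5} an arbitrary difference of a single spline layer must be decomposed into mixed differences and controlled by Whitney‑ and Bernstein‑type inequalities, which is what forces the coarse‑scale corrections and the $t^r\|f\|$ term (and is presumably not sharp). The remaining ingredients—uniform boundedness of $S_m$ near $\partial I^d$, the one‑dimensional functional identities, the mixed‑difference bookkeeping, and the uniform convergence $L_mf\to f$—are routine.
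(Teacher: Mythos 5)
Your overall architecture---dyadic tensor--product local Lagrange interpolants $L_m$, a telescoping decomposition of $f$ into spline layers, and control of each layer by the discrete axis differences---is the same as the paper's, which builds the interpolants locally on a basic cube containing $[\u,\u+rt\ei]$ rather than globally on $I^d$. Your key technical step, however, is genuinely different. The paper bounds a layer $S_{m+1}-S_m$ by its Lemma~3.1: a continuous function, piecewise in $\prd$ on $2^d$ subcubes and vanishing at the $r^d$ even mesh points, is controlled by its axis $r$-th differences; the proof requires inverting a specific $(r-1)\times(r-1)$ matrix of binomial coefficients, whose nonvanishing determinant is exactly the Ratliff--Rush result the authors single out as essential. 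You instead use the soft fact that on a bounded box of grid points the functionals $g\mapsto\Delta^r_{\ei}g(\k)$ span the annihilator of $\prd$ (their common kernel is $\prd$ restricted to the grid, so finite-dimensional duality gives the spanning), whence any uniformly bounded functional of the mesh values that kills $\prd$---such as $(L_{m+1}f-L_mf)(\x)$ or $\Delta^r_{t\ei}(L_mf)(\u)$ viewed through the nodal values---is automatically an $O(1)$-combination of axis $r$-th differences. This is more elementary and dispenses with the determinant evaluation. Likewise, for \eqref{1.5} you give a self-contained layer-by-layer argument where the paper simply invokes an inequality from \cite{Di84} relating $\omega^r(f,t)$ to the axis-directional moduli; your staircase argument for \eqref{1.55} matches the paper's in spirit.

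Three points need repair. (i) For $\Delta^r_{t\ei}(L_mf)(\u)$ you argue via ``the jumps of the $(r-1)$-st derivative,'' but $S_m$ produces only $C^0$ piecewise polynomials, not $C^{r-2}$ splines, and for $r\ge3$ a merely continuous piecewise polynomial of degree $<r$ whose $r$-th differences at all nodes vanish need not be a single polynomial (take $\phi(x)=a_j(x-\xi_j)(x-\xi_{j+1})$ on each cell: it vanishes at every node yet has nonzero oblique-step $r$-th differences). The estimate is nevertheless true because $L_mf$ restricted to the line lies in the range of the one-dimensional $S_m$, hence is determined by its nodal values, so $\Delta^r_{t\ei}(L_mf)(\u)$ is a bounded functional of those values annihilating $\pro$, and your own annihilator argument applies; you should say this instead. (ii) For the middle layers in \eqref{1.5}, ``Taylor plus Bernstein'' for mixed differences is problematic since mixed derivatives such as $\partial_1^2\partial_2$ of a $C^0$ piecewise polynomial carry singular parts on the interfaces; fortunately the weights $2^{kr}$ in \eqref{1.5} are so generous that the crude bound $\omega^r(L_{\ell+1}f-L_\ell f,t)\le 2^r\|L_{\ell+1}f-L_\ell f\|$ already suffices, because $n-n_1=n_0-1\le n_0$. (iii) $L_{n_1}f$ is not a single polynomial (it has $2^{n_1}$ cells per axis), so $\omega^r(L_{n_1}f,t)\le Ct^r\|f\|$ is not immediate; compare it with the single element of $\prd$ interpolating $f$ at $r^d$ level-$n_1$ nodes, the difference being $O(\Psi_r(n_1))$ by the same annihilator argument and absorbed into the coarse sum. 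With these repairs your proposal is a valid, and in its key lemma more elementary, proof.
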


The discrete $r$-th modulus of smoothness is $\Psi_r(f,n)$ defined in~\eqref{1.3}. Clearly, $\Psi_r(f,n)$ depends only on the values of $f$ at the diadic mesh $2^{-n}\k$.

Theorem~\ref{thm1.1} implies that in the direction of the axes we have a somewhat better estimate, and using appropriate references, it will be shown that this is inherent to this problem and not a result of weakness in the proof. We also show that $\Psi_r(f,n)=o(2^{-nr})$ implies that $f$ is a polynomial of degree $\le r-1$ in each variable, which is the small $o$ saturation class. For $d=1$ a somewhat more general result than Theorem~\ref{thm1.1} (in some ways) was proved in~\cite[p.~119]{Di87}. In addition, for $d=2$ and $r=2$ a somewhat weaker result was proved in~\cite[p.~314]{Di88}. As part of the proof we will prove again the result for $d=1$ as the present different construction will be needed for the general case. The process we use consists of the construction of a sequence of spline functions which are locally in $\prd$, where $\prd$ is the set of polynomials of degree smaller than $r$ in each variable. That sequence will converge to our function $f$ and will yield our result.

We were fortunate that a result on determinants which we needed was proved recently (in 1999) by Ratlieff and Rush (see~\cite{RaRu}).

We prove Theorem~\ref{thm1.1} in \S4 using lemmas for $d=1$ and for $d>1$ established in \S2 and \S3 respectively. In \S5 we present several remarks and corollaries that will make Theorem~\ref{thm1.1} easier to apply, demonstrate the need for its differen parts, and prove the small $o$ saturation result.

\section{The crucial lemma for $d=1$}
In this section we give the lemma that will be crucial in the proof of Theorem~\ref{thm1.1} for $d=1$ and afterwards for other $d$. In fact, this lemma is the heart of the matter.

\begin{lemma}\label{lm2.1}
Suppose $a\in\R$, $r\ge2$, $h>0$, $g\in C[a,a+2(r-1)h]$ and that
\[
g|_{[a,a+(r-1)h]}\in\pro,\quad
g|_{[a+(r-1)h,a+2(r-1)h]}\in\pro
\]
and suppose also that $g(a+2kh)=0$ for $k=0,\dots,r-1$. Then
\be
\|g\|_{C[a,a+2(r-1)h]}\le c(r)\max_{j=0,\dots,r-2}\bigl|\Delta^r_h g(a+jh)\bigr|
\ee
where $c(r)$ is independent of $a$, $h$ and $g$.
\end{lemma}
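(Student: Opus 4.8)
The plan is to recast the inequality as a statement in finite-dimensional linear algebra and then invoke the determinant identity of~\cite{RaRu}. First I would normalize by the affine substitution $x\mapsto a+hx$: both the hypotheses and both sides of the claimed estimate are preserved by it (the constant is allowed to depend only on $r$), so it suffices to treat $a=0$, $h=1$. Let $V$ be the linear space of all $g\in C[0,2(r-1)]$ with $g|_{[0,r-1]}\in\pro$, $g|_{[r-1,2(r-1)]}\in\pro$, and $g(2k)=0$ for $k=0,1,\dots,r-1$. A polynomial of degree $<r$ is uniquely determined by its values at the $r$ integer points of an interval of length $r-1$; applying this on $[0,r-1]$ and on $[r-1,2(r-1)]$, and noting that the prescribed zeros together with any choice of the odd-index values $g(1),g(3),\dots,g(2r-3)$ force the two polynomial pieces to agree at the common node $r-1$, one sees that
\[
g\ \longmapsto\ \bigl(g(1),g(3),\dots,g(2r-3)\bigr)
\]
is a linear isomorphism of $V$ onto $\R^{r-1}$. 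In particular $\dim V=r-1$, so $g\mapsto\|g\|_{C[0,2(r-1)]}$ is a norm on a space of dimension $r-1$.

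Next I would look at the linear map $T\colon V\to\R^{r-1}$, $Tg=\bigl(\Delta^r_1g(j)\bigr)_{j=0}^{r-2}$, and show it is injective. Granting this, $T$ is an isomorphism (the dimensions agree), hence $g\mapsto\max_{0\le j\le r-2}|\Delta^r_1g(j)|$ is a second norm on $V$, comparable to $\|g\|_{C[0,2(r-1)]}$ with a constant $c(r)$ depending only on $r$, which is exactly the assertion. To prove injectivity, suppose $\Delta^r_1g(j)=0$ for $j=0,\dots,r-2$. Expanding $\Delta^r_1g(j)=\sum_{k=0}^r(-1)^{r-k}{r\choose k}g(j+k)$ and discarding every term with $j+k$ even (where $g$ vanishes), these $r-1$ identities become a homogeneous linear system in the $r-1$ unknowns $g(1),g(3),\dots,g(2r-3)$ whose coefficient matrix equals, up to a nonzero factor in each row, the $(r-1)\times(r-1)$ matrix with $(a,i)$-entry ${r\choose 2i-a}$, $1\le a,i\le r-1$ (with the usual convention ${r\choose m}=0$ for $m<0$ or $m>r$). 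By the determinant result of Ratlieff and Rush~\cite{RaRu} this matrix is nonsingular, so $g(1)=g(3)=\dots=g(2r-3)=0$; combined with $g(2k)=0$ this gives $g(m)=0$ for every integer $m\in[0,2(r-1)]$. Then $g|_{[0,r-1]}\in\pro$ vanishes at the $r$ points $0,1,\dots,r-1$ and $g|_{[r-1,2(r-1)]}\in\pro$ vanishes at the $r$ points $r-1,\dots,2(r-1)$, so $g\equiv0$. This proves injectivity, and hence the lemma.

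The hard part is the nonvanishing of the binomial determinant $\det\bigl[{r\choose 2i-a}\bigr]_{a,i=1}^{r-1}$; this is precisely the point the paper isolates as needing the (non-trivial) $1999$ theorem of~\cite{RaRu}, and I would not attempt to reprove it from scratch. The remainder is routine: checking that the odd-index values genuinely parametrize $V$ (so that the relevant matrix is square), tracking which $k$ survive in $\Delta^r_1g(j)$ according to the parity of $j$, and the soft equivalence-of-norms step at the end. As a sanity check one can compute the matrix by hand in small cases --- $r=2$ gives $[\,2\,]$ and $r=3$ gives the matrix with rows $(3,1)$ and $(1,3)$, of determinant $8$ --- which confirms both its shape and its nonsingularity.
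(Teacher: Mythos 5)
Your proposal is correct and follows essentially the same route as the paper: reduce to the odd-node values $g(a+(2j-1)h)$, observe that the $r$-th differences act on them through the $(r-1)\times(r-1)$ binomial-coefficient matrix $\bigl[\binom{r}{2i-a}\bigr]$, and invoke the Ratliff--Rush determinant identity for its nonsingularity. The only cosmetic difference is that you finish by equivalence of norms on the $(r-1)$-dimensional space $V$ after normalizing $a=0$, $h=1$, whereas the paper argues quantitatively via Lagrange interpolation bounds and explicit inversion of the same matrix; both yield a constant depending only on $r$.
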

\begin{proof}
We set for $1\le j\le r-1$ $g(a+(2j-1)h)=v_j$, and as $g|_{[a,a+(r-1)h]}$ and $g|_{[a+(r-1)h,a+2(r-1)h]}$ are polynomials of degree smaller than $r$ (that is, they are in $\pro$), they can be constructed as the Lagrange interpolation of $g$ using the points $a+ih$ where $0\le i\le r-1$ and  the points $a+ih$ where $r-1\le i\le 2(r-1)$ respectively. More precisely,
\[
g|_{[a,a+(r-1)h]}=\sum_{i=0}^{r-1} g(a+ih) l_i, \quad\text{where}\quad l_i(x)=\prod_{\substack{j=0\\j\ne i}}^{r-1}\frac{x-(a+jh)}{(a+ih)-(a+jh)}
\]
are the Lagrange basis polynomials. Clearly, $\ds\max_{x\in[a,a+(r-1)h]}|l_i(x)|\le c_1(r)$ with $c_1(r)$ independent of $a$ and $h$. The same bound is valid for the Lagrange basis polynomials of $g|_{[a+(r-1)h,a+2(r-1)h]}$. Therefore,
\[
\|g\|_{C[a,a+2(r-1)h]}\le c_2(r) \max_{0\le k\le 2(r-1)} |g(a+kh)|=c_2(r)\max_{1\le j\le r-1}|v_j|.
\]
We will complete the proof when we show
\be\label{2.2}
\max_{1\le j\le r-1}|v_j|\le c_3(r)\max_{0\le i\le r-2}\bigl|\Delta^r_hg(a+ih)\bigr|.
\ee
While for small $r$ one can easily verify~\eqref{2.2} directly, it gets complicated for higher $r$. For $i=1,\dots,r-1$ we can write
\begin{align*}
\Delta^r_hg(a+(i-1)h) &= \sum_{k=0}^r (-1)^{r-k} {r \choose k} g(a+(i-1+k)h) \\
&= \sum_{i\le 2j\le r+i}(-1)^{r+i-2j}{r \choose 2j-i} g(a+(2j-1)h) \\
&= (-1)^{r+i}\sum_{j=1}^{r-1}{r \choose 2j-i}v_j
\end{align*}
where ${r \choose 2j-i}\equiv0$ for $2j-i\not\in[0,r]$. Essentially, this is a linear transformation from the vector $(0,v_1,0,v_2,\dots,v_{r-1},0)$ to the vector $(\Delta^r_hg(a),\dots,\Delta^r_hg(a+(r-2)h))$. The transformation is represented by the $(r-1)\times(2r-1)$ matrix whose entries are the binomial coefficients, $a_{m,l}=(-1)^{r-l+m}{r \choose l-m}$ for $1\le m\le r-1$, $1\le l\le2r-1$ when $0\le l-m\le r$ and $0$ elsewhere. As the vector $(0,v_1,0,v_2,\dots,v_{r-1},0)$ consists of zeros in its odd entries, we may consider the transformation from $(v_1,\dots,v_{r-1})$ into $(\Delta^r_hg(a),\dots,\Delta^r_hg(a+(r-2)h))$ which is the $(r-1)\times(r-1)$ matrix of the binomial coefficients $a_{m,l}$ from which all the odd columns were eliminated. (This is in fact $\Delta^r_hg(a+(i-1)h)=(-1)^{r+i}\sum_{j=1}^{r-i}{r \choose 2j-i}v_j$ where ${r \choose 2j-i}=0$ for $2j-i\not\in[0,r]$.) It was recently proved by Ratlief and Rush (see~\cite[Theorem~1.1]{RaRu}) that the absolute value of the determinant of such a matrix is not zero and is in fact equal to $2^{r(r-1)/2}$. Therefore, this matrix has an inverse and~\eqref{2.2} is proved (with $c_2(r)$ depending only on the matrix).
\end{proof}

The analogue for $r=1$ is essentially trivial and is given by the following result that will serve to prove Theorem~\ref{thm1.1} for $r=1$, first for $d=1$ and then for other $d$.
\begin{lemma}\label{lm2.2}
Suppose $a\in\R$, $g\in L_\infty[a,a+2h]$,
\[
g|_{[a,a+h)}\in\poo, \quad g|_{[a+h,a+2h)}\in\poo,
\]
and $g(a)=0$. Then $\|g\|_{L_\infty[a,a+2h)}=|\Delta_hg(a)|$.
\end{lemma}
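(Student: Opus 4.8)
The plan is to exploit the very rigid structure forced by the hypotheses. Since $g$ is piecewise in $\poo$ (i.e.\ piecewise constant) on the two intervals $[a,a+h)$ and $[a+h,a+2h)$, $g$ takes only two values: say $g\equiv c_0$ on $[a,a+h)$ and $g\equiv c_1$ on $[a+h,a+2h)$. The hypothesis $g(a)=0$ immediately gives $c_0=0$, so $g$ is identically $0$ on the first half-interval and equals the single constant $c_1$ on the second. Hence $\|g\|_{L_\infty[a,a+2h)}=|c_1|$.

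Next I would compute $\Delta_h g(a)=g(a+h)-g(a)=c_1-0=c_1$, so $|\Delta_h g(a)|=|c_1|=\|g\|_{L_\infty[a,a+2h)}$, which is exactly the claimed identity. This is really all there is to it: the combination of ``piecewise constant on halves of length $h$'' with the vanishing at the left endpoint collapses all the degrees of freedom down to a single number, and the first-order difference over step $h$ reads off precisely that number.

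There is essentially no obstacle here; the only mild point of care is bookkeeping about half-open intervals and the sense in which $g(a)=0$ is meant (a genuine pointwise value at the endpoint, consistent with $g\in\poo$ on $[a,a+h)$), but since $g$ restricted to $[a,a+h)$ is a constant, its value at $a$ being $0$ is unambiguous and forces that constant to be $0$. So the proof is just the two computations above, and it serves, as the text notes, as the $r=1$ analogue of Lemma~\ref{lm2.1} that will feed into the proof of Theorem~\ref{thm1.1}.
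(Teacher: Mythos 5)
Your proof is correct and is exactly the argument the paper intends: the paper's entire proof is the one-line observation that $\poo$ is the set of constants, and your elaboration (the two constants $c_0=0$ and $c_1$, with $\Delta_hg(a)=c_1$) is just that observation spelled out.
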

\begin{proof}
Observe that $\poo$ is the set of constants on $\R$.
\end{proof}

\section{The crucial lemma for $d>1$}
For the proof of Theorem~\ref{thm1.1} for $d>1$ we need to extend Lemma~\ref{lm2.1} (and Lemma~\ref{lm2.2}) to higher dimensions. That is, we will prove the following result.
\begin{lemma}\label{lm3.1}
Suppose $\a\in\R^d$, $r\ge2$, $g\in C(\a+2(r-1)hI^d)$ for some $h>0$, $g|_{\a+\v (r-1)h+(r-1)hI^d}\in\prd$ where $\v\in\R^d$ is any vector whose entries are $1$ or $0$, and suppose $g(\a+2\v kh)=0$ for $1\le k\le r-1$ and any $\v$ as above. Then
\be
\|g\|_{C(\a+2(r-1)hI^d)}\le c(r,d) \max_{i,\v_i}\bigl|\Delta^r_{h\ei}g(\a+\vi h)\bigr|
\ee
where $\vi\in\R^d$ given by $\vi=(v_{i,1},\dots,v_{i,d})$ and $v_{i,j}$ can take the values $0,\dots,2(r-1)$ when $i\ne j$, while $v_{i,i}$ can take the values $0,\dots,r-2$.
\end{lemma}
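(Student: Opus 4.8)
The plan is to reduce Lemma~\ref{lm3.1} to the one-dimensional Lemma~\ref{lm2.1} by slicing $\a+2(r-1)hI^d$ along coordinate directions and applying the one-dimensional estimate repeatedly, one variable at a time. The key structural observation is that on each subcube $\a+\v(r-1)h+(r-1)hI^d$ the function $g$ is a polynomial of degree $<r$ in each variable separately (it lies in $\prd$), so freezing all but one coordinate at the dyadic breakpoints leaves a genuine one-variable polynomial of degree $<r$ on each of the two halves; this is exactly the hypothesis needed to invoke Lemma~\ref{lm2.1} in that coordinate.

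First I would set up the induction on the dimension, or equivalently on the number of coordinates that have been ``processed''. For the base case $d=1$ this is Lemma~\ref{lm2.1} verbatim. For the inductive step, fix the first coordinate and consider, for each admissible value $x_1$, the restriction $g_{x_1}$ of $g$ to the slice $\{x_1\}\times(\a'+2(r-1)hI^{d-1})$. This slice function is continuous, is a tensor-product polynomial of degree $<r$ in each of its $d-1$ variables on each of the $2^{d-1}$ subcubes, and vanishes at the appropriate dyadic points, so the $(d-1)$-dimensional version of the lemma bounds $\|g_{x_1}\|$ by a constant times the maximum of $|\Delta^r_{h\ei}g_{x_1}|$ over $i=2,\dots,d$ evaluated at the prescribed points. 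The difference operator $\Delta^r_{h\ei}$ in a direction $i\neq 1$ commutes with restriction in the first coordinate, so each such quantity is of the form $|\Delta^r_{h\ei}g(\cdot)|$ with the first coordinate still free. To then control the first coordinate, I fix all coordinates $2,\dots,d$ at the evaluation nodes appearing in these bounds and apply the one-dimensional Lemma~\ref{lm2.1} in the first direction: on each such line $g$ is piecewise a polynomial of degree $<r$ with two pieces meeting at the midpoint, and it vanishes at the points $a_1+2kh$ for $k=0,\dots,r-1$ exactly as required (the vanishing hypothesis $g(\a+2\v kh)=0$ supplies precisely these zeros when we take $\v=\ei$). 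This yields a bound of the form $\max_j|\Delta^r_{h\e_1}g(\text{node}+jh\e_1)|$, and composing the two estimates gives the claimed inequality with $c(r,d)=c(r)^{d}$ up to the bookkeeping constant, and with the mixed evaluation points having one ``short'' coordinate (range $0,\dots,r-2$, namely the direction of the difference) and the rest ``long'' (range $0,\dots,2(r-1)$).

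The main obstacle I anticipate is purely bookkeeping rather than conceptual: one must verify that the evaluation nodes produced by the $(d-1)$-dimensional estimate, when plugged into the one-dimensional estimate in the first coordinate, all fall inside the allowed index ranges $v_{i,j}\in\{0,\dots,2(r-1)\}$ for $j\neq i$ and $v_{i,i}\in\{0,\dots,r-2\}$, and that the two halves $\a+\v(r-1)h+(r-1)hI^d$ over $\v$ really do cover the whole cube consistently in every direction so that Lemma~\ref{lm2.1} applies on each line segment (in particular that the breakpoint $a_\ell+(r-1)h$ in each direction is common to the two relevant subcubes, which it is by construction). A secondary subtlety is making sure the polynomial-restriction argument is stated for the correct coordinate: because $\prd$ is a tensor-product class, fixing any single coordinate at any value leaves an element of $\pp_{r,d-1}$ in the remaining variables, and fixing all but one leaves an element of $\pro$; both facts are immediate from the definition of $\prd$ but should be spelled out. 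For $r=1$ the same slicing argument applies using Lemma~\ref{lm2.2} in place of Lemma~\ref{lm2.1} at every stage, which is why Lemma~\ref{lm2.2} was recorded; I would dispose of that case in a sentence at the end.
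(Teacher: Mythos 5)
There is a genuine gap, and it sits exactly at the point you dismiss as ``bookkeeping'': the vanishing hypothesis is not available on the lines where you want to apply Lemma~\ref{lm2.1}. After the paper's normalization ($\a=0$, $h=1$, domain $[0,2(r-1)]^d$), the hypothesis gives $g=0$ only at the $r^d$ mesh points \emph{all} of whose coordinates are even integers. Taking $\v=\e_1$ in $g(\a+2\v kh)=0$ supplies zeros only on the single line through $\a$ itself; it says nothing about the lines through the evaluation nodes produced by your $(d-1)$-dimensional step, whose coordinates range over $0,\dots,2(r-1)$ and are typically odd. Already for $d=2$, $r=2$ the point $(1,1)$ shows the problem: neither $x\mapsto g(x,1)$ nor $y\mapsto g(1,y)$ has any zeros guaranteed by the hypothesis, so Lemma~\ref{lm2.1} cannot be invoked directly in either direction. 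The same defect undermines your first step as well: the slice $g_{x_1}$ satisfies the vanishing hypothesis of the $(d-1)$-dimensional lemma only when $x_1=a_1+2lh$ is an even node, not ``for each admissible value $x_1$''.

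The missing idea, which is the actual content of the paper's proof, is a subtraction of an interpolating polynomial. The induction hypothesis is applied only on the even slices $x_d=2l$, $l=0,\dots,r-1$. Then, on a line $x_d\mapsto g(v_1,\dots,v_{d-1},x_d)$ where some frozen $v_j$ is odd, one forms the polynomial $P\in\pro$ interpolating this function at $x_d=0,2,\dots,2(r-1)$; the difference $g(v_1,\dots,v_{d-1},\cdot)-P$ does vanish at those points and is still piecewise in $\pro$, so Lemma~\ref{lm2.1} applies to it, and since $\Delta^r_{\ed}P=0$ the resulting bound involves only $\Delta^r_{\ed}g$. Finally $\|P\|_{C[0,2(r-1)]}\le c(r)\max_l|g(v_1,\dots,v_{d-1},2l)|$, and these values lie on the even slices where the $(d-1)$-dimensional estimate legitimately applies. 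Your surrounding scaffolding (reduction to integer nodes via tensor-product Lagrange interpolation, induction on $d$, tensor-product restriction facts, the $r=1$ case via Lemma~\ref{lm2.2}) matches the paper, but without this subtraction step the argument does not close.
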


We note that $\a+2(r-1)hI^d$ is a cube of side $2h(r-1)$ parallel to the axes. This cube is divided into $2^d$ cubes $\a+\v (r-1)h+(r-1)hI^d$ on which the function $g$ is a polynomial of degree $<r$ in the directions of the axes. We also note that $g$ equals $0$ on the $r^d$ points $\a+2\v kh$, $1\le k\le r-1$, of the mesh of points which are spaced at equal distances of $2h$ in each direction.

For $r=1$ we define $\tilde I^d=[0,1)^d$ and the needed lemma whose proof is trivial is given as follows:
\begin{lemma}\label{lm3.2}
Suppose $\a\in\R^d$, $g\in L_\infty(\a+2h\tilde I^d)$, $g|_{\a+\v h+h\tilde I^d}\in\pod$ with $\v$ of Lemma~\ref{lm3.1}, and that $g(\a)=0$. Then
\[
\|g\|_{L_\infty(\a+h\tilde I^d)}\le d\max_i\bigl( |\Delta_{h\ei}g(\a)|, \max_{j\ne i} |\Delta_{h\ei}g(\a+h\ej)| \bigr).
\]
\end{lemma}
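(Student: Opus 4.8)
The plan is to reduce Lemma~\ref{lm3.2} to the one-dimensional situation of Lemma~\ref{lm2.2} by slicing the cube along coordinate directions, one variable at a time, and to track how the error accumulates over the $d$ slices. Concretely, I would fix a point $\x\in\a+h\tilde I^d$ and interpolate between $\x$ and $\a$ by changing one coordinate at a time: set $\x^{(0)}=\x$ and let $\x^{(m)}$ agree with $\x^{(m-1)}$ except that its $m$-th coordinate is replaced by $a_m$, so that $\x^{(d)}=\a$. Then
\[
g(\x)=g(\x)-g(\a)=\sum_{m=1}^d\bigl(g(\x^{(m-1)})-g(\x^{(m)})\bigr),
\]
and each difference $g(\x^{(m-1)})-g(\x^{(m)})$ is an increment of $g$ in the $m$-th coordinate direction only, evaluated at a point whose $m$-th coordinate lies in $[a_m,a_m+h)$.

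The next step is to bound each such one-dimensional increment. Fix $m$ and freeze all coordinates except the $m$-th at the values they take in $\x^{(m-1)}$; call the resulting univariate function $\phi(s)=g(\dots,s,\dots)$ on $[a_m,a_m+2h)$. On $[a_m,a_m+h)$ and on $[a_m+h,a_m+2h)$ the function $\phi$ lies in $\poo$ (it is the restriction of an element of $\pod$ to an axis-parallel segment), so Lemma~\ref{lm2.2} applies \emph{provided} $\phi$ vanishes at $a_m$. It does not vanish there in general, but $g(\x^{(m-1)})-g(\x^{(m)})=\phi(x_m)-\phi(a_m)$ is exactly $\Delta_{s}\phi$ for the appropriate step, and since $\phi$ is piecewise constant with at most one breakpoint, $|\phi(x_m)-\phi(a_m)|\le|\Delta_h\phi(a_m)|=|\Delta_{h\e_m}g(\x^{(m)})|$, i.e.\ it is controlled by the jump of $g$ in direction $\e_m$ at the point $\x^{(m)}$. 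The remaining issue is that $\x^{(m)}$ is a generic point of the relevant sub-slab, not one of the two allowed base points $\a$ or $\a+h\ej$; but $\Delta_{h\e_m}g$ as a function of the remaining $d-1$ coordinates is itself piecewise constant on the dyadic sub-slabs of $\a+h\tilde I^d$, so its value at $\x^{(m)}$ equals its value at some corner of the form $\a$ or $\a+h\ej$ with $j\ne m$. Collecting the $d$ terms gives $|g(\x)|\le\sum_{m=1}^d\max_{j\ne m}\bigl(|\Delta_{h\e_m}g(\a)|,|\Delta_{h\e_m}g(\a+h\ej)|\bigr)$, which is dominated by $d\,\max_i\bigl(|\Delta_{h\ei}g(\a)|,\max_{j\ne i}|\Delta_{h\ei}g(\a+h\ej)|\bigr)$, as claimed.

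The main obstacle, such as it is, is purely bookkeeping: one must be careful that after freezing coordinates $1,\dots,m-1$ at their $\x$-values and coordinates $m+1,\dots,d$ at their $\x^{(m-1)}=\x$-values, the univariate restriction really is piecewise-$\poo$ with its single breakpoint at $a_m+h$ (this uses that $g$ is piecewise polynomial exactly on the $2^d$ dyadic sub-cubes $\a+\v h+h\tilde I^d$), and that the value of the direction-$m$ difference operator, viewed on the $(d-1)$-dimensional cross-section, is genuinely constant on each dyadic piece so that it can be read off at an admissible corner. Since $\pod$ restricted to an axis line is constant, each such piece really is in $\poo$, so there is no analytic content beyond Lemma~\ref{lm2.2} and the telescoping identity; the proof is short and the factor $d$ comes directly from the $d$ terms of the telescoping sum.
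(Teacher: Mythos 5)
Your telescoping strategy is the natural one, and the paper itself offers no proof here (Lemma~\ref{lm3.2} is simply declared trivial), so the real question is whether your argument establishes what the lemma must mean. Two things need attention. First, as printed the lemma is vacuous: $g$ restricted to $\a+h\tilde I^d$ is the element of $\pod$ attached to $\v=0$, hence a constant, and $g(\a)=0$ forces $g\equiv0$ there, so the left-hand side is $0$. Your proof, which you explicitly run for $\x\in\a+h\tilde I^d$, is correct for that statement, but only because every telescoping increment $\phi(x_m)-\phi(a_m)$ vanishes (both arguments lie in the same half-interval $[a_m,a_m+h)$); none of the machinery about reading off jumps is actually exercised. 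The version that is used in \S4 (to get $|S_{n+k}-S_{n+k-1}|\le\Psi_1(n+k)$) is the bound over the whole cube $\a+2h\tilde I^d$, and your write-up is clearly reaching for that version.

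Second, for the full-cube version your key step fails when $d\ge3$. After reducing coordinates $1,\dots,m-1$ to $a_1,\dots,a_{m-1}$, the remaining coordinates $x_{m+1},\dots,x_d$ may each lie in the upper half $[a_j+h,a_j+2h)$, so the sub-slab containing $\x^{(m)}$ corresponds to a corner $\a+h\sum_{j\in S}\ej$ with $S\subseteq\{m+1,\dots,d\}$ possibly of size at least $2$; such a corner is not of the form $\a$ or $\a+h\ej$, so $\Delta_{h\e_m}g(\x^{(m)})$ cannot be identified with any difference appearing on the right-hand side, and no reordering of the telescoping avoids this. This is not mere bookkeeping: with only the listed base points the full-cube inequality is false for $d\ge3$ (take $g=1$ on the subcube $\a+h(1,\dots,1)+h\tilde I^d$ and $g=0$ elsewhere; then every $\Delta_{h\ei}g(\a)$ and every $\Delta_{h\ei}g(\a+h\ej)$ equals $0$ while $\|g\|_{L_\infty(\a+2h\tilde I^d)}=1$). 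The repair, consistent with the range of base points in Lemma~\ref{lm3.1} and with what the application in \S4 actually supplies, is to allow all base points $\a+h\v$ with $\v\in\{0,1\}^d$ and $v_i=0$; with that enlargement your telescoping argument goes through verbatim and yields the factor $d$.
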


{\em Proof of Lemma~\ref{lm3.1}.}
Using translation and dilation, which do not change the values of $g$, we may assume that $g$ is defined on $[0,2(r-1)]^d$ and that $g(\u)=0$ on the vectors $\u$ with all entries being even integers. On each of the $2^d$ cubes $(r-1)\v+(r-1)I^d$ (with entries of $\v$ being either $0$ or $1$) the $r^d$ points of that cube whose entries are integers completely determines the polynomials in $\prd$ as there are exactly $r^d$ monomials. Similarly to the proof of Lemma~\ref{lm2.1}, each of these polynomials can be written as the appropriate Lagrange interpolation, using the tensor products of the corresponding univariate Lagrange basis polynomials. Therefore, $\|g\|_{C[0,2(r-1)]^d}$ is bounded by $\max|g(\u)|$ with $\u=(u_1,\dots,u_d)$, such that $u_j=0,1,\dots,2(r-2)$, and at least one $u_j$ is an odd integer. We prove our lemma by induction. For $d=1$ it is Lemma~\ref{lm2.1}. Assume our lemma for $d-1$, and hence it is valid on the $d-1$ dimensional cubes of points in $[0,2(r-1)]^d$ with the last coordinate being even integer. That is, for $l=0,1,\dots,r-1$
\be\label{3.2}
\|g(\cdot,\dots,\cdot,2l)\|_{C(2(r-1)I^{d-1})}\le c(r,d-1)
\max_{1\le i<d}\bigl|\Delta_{\ei}^rg(u_1,\dots,u_{d-1},2l)\bigr|,
\ee
where the integers $u_k$ satisfy $0\le u_j\le 2(r-1)$ for $j\ne i$, $0\le j\le d-1$ and $u_i$ satisfies $0\le u_i\le r-2$.  The function $g(v_1,\dots,v_{d-1},x_d)$ for fixed $v_i$ integers is continuous when $x_d\in[0,2(r-1)]$ and is a polynomial of degree $\le r-1$ for $x_d\in[0,r-1]$ and for $x_d\in[r-1,2(r-1)]$. Examining $(v_1,\dots,v_{d-1})$, we encounter two situations: either all $v_i$ are even or not. In the first situation $g(v_1,\dots,v_{d-1},x_d)$ satisfies the conditions of Lemma~\ref{lm2.1} for $x_d$ and hence,
\[
|g(v_1,\dots,v_{d-1},x_d)|\le c(r,1) \max_{v_d=0,\dots,r-2}\bigl|\Delta^r_\ed g(v_1,\dots,v_{d-1},v_d)\bigr|.
\]
In the second situation we use a polynomial in $x_d$ of degree $\le r-1$ that interpolates $g(v_1,\dots,v_{d-1},x_d)$ at the points $x_d=0$, $x_d=2$, $\dots$, $x_d=2(r-1)$ which we call $P(x_d)$. The function of one variable $g(v_1,\dots,v_{d-1},x_d)-P(x_d)$ satisfies the condition of Lemma~\ref{lm2.1} for $x_d$, and hence,
\[
\|g(v_1,\dots,v_{d-1},\cdot)-P(\cdot)\|_{C[0,2(r-1)]}
\le c(r,1)
\max_{0\le v_d\le r-2}
\bigl|\Delta^r_\ed(g(v_1,\dots,v_{d-1},v_d)-P(v_d))\bigr|.
\]
Clearly, $\Delta^r_\ed P(x)=0$ and thus we only have to estimate $\|P\|_{C[0,2(r-1)]}$. Since $P\in\pro$ interpolates $g$ at  $x_d=0$, $x_d=2$, $\dots$, $x_d=2(r-1)$, we have
\[
\|P\|_{C[0,2(r-1)]}\le c(r) \max_{l=0,\dots,r-1}|P(2l)|=c(r) \max_{l=0,\dots,r-1}|g(v_1,\dots,v_{d-1},2l)|.
\]
The last quantity is estimated using the induction hypothesis~\eqref{3.2}, which completes the proof of our lemma.
\qquad\endproof

\section{Proof of the main result}
In this section we prove Theorem~\ref{thm1.1} which is the main result of this paper.

\begin{proof}[Proof of Theorem~\ref{thm1.1}.]
We first prove~\eqref{1.4}. 
We prove~\eqref{1.4} for $t\le 2^{-n-1}$ (rather than for $t\le 2^{-n}$) which may just contribute a somewhat bigger constant as
\[
|\Delta^r_{2t\ei}f(\u)|\le 2^r \max_{\v,\v+rt\ei\in I^d} |\Delta^r_{t\ei}f(\v)|.
\]
We further assume that $r-1\le2^{n-1}$.

We prove our theorem for fixed $\u$, $t$ and $i$ satisfying $\u,\u+rt\ei\in I^d$.

For $r\ge2$ we construct the basic cube $\k2^{-n}+2(r-1)2^{-n}I^d\equiv A\subset I^d$ such that $\u,\u+rt\ei\in A$. The choice of $\k=(k_1,\dots,k_d)$ where $k_j$ can take values $0,1,2,\dots$ can be made by choosing $k_j$ such that $0\le k_j 2^{-n}\le u_j\le k_j 2^{-n}+(r-1)2^{-n+1}\le 1$. To choose $k_j$ if $u_j\le\frac12$, we select $k_j=\min([2^nu_j],2^n-2(r-1))$ (where $[y]$ is the biggest integer smaller than or equal to $y$). When $u_j\ge\frac12$, we set $u_j+rt=\tilde u_j$ and $\tilde k_j=\min([2^n(1-\tilde u_j)],2^n-2(r-1))$ and define $k_j$ by $k_j=2^n-\tilde k_j-2(r-1)$.

We now construct a sequence of spline functions on $A$. $S_n$ is the polynomial of degree $\le r-1$ in the direction of the axes on $A$ interpolating $f$ at the points $\k 2^{-n}+\w 2^{-n+1}$ where $\w=(w_1,\dots,w_d)$ and $w_j$ takes values $0,\dots,r-1$. We divide $A$ into $2^r$ cubes by dividing each side by $2$, that is, the cubes $\k 2^{-n}+\v (r-1)2^{-n}+(r-1)2^{-n}I^d$, where $\v$ has entries $0$ or $1$. Then $S_{n+1}$ is a polynomial in $\prd$ in each of the cubes interpolating $f$ at $\k 2^{-n} +\v (r-1) 2^{-n}+ \w 2^{-n}$ where $w_j$ takes values $0,\dots,r-1$. By a process of dividing each cube into $2^r$ cubes and having a polynomial in $\prd$ in each cube, we obtain in the $k$-th step the spline $S_{n+k}$ defined on $A$. For $S_{n+k}$ $A$ is divided into $2^{rk}$ cubes with sides of size $(r-1)2^{-n-k+1}$ and in each cube $S_{n+k}$ is in $\prd$ interpolating $f$ at the points $\k 2^{-n} + \v (r-1) 2^{-n-k+1} + \w 2^{-n-k+1}$ (where $\v$ is a vector of integers and $w_j=0,\dots,r-1$). On each such cube $B$, $S_{n+k}=S_{n+k}(f)$ is a projection operator from $C(B)$ to $\prd$ whose norm is bounded and depends on $r$ and $d$ only. Since $\prd$ contains constant functions, we obtain
\[
\|f-S_{n+k}\|_{C(B)}\le c_1(r,d) \max_{\x,\y\in B}|f(\x)-f(\y)|,
\]
and using uniform continuity of $f$ on $A$, we conclude that the constructed sequence of splines $S_m$ converges to $f$ in $C(A)$. Hence,
\[
f(\x)=S_n(\x)+\sum_{k=1}^\infty(S_{n+k}(\x)-S_{n+k-1}(\x)).
\]
Therefore,
\[
|\Delta^r_{t\ei}f(\u)|\le |\Delta^r_{t\ei} S_h(\u)|+2^r\sum_{k=1}^\infty \|S_{n+k}-S_{n+k-1}\|_{C(A)}.
\]
As $\Delta^r_{t\ei}S_n(\u)=0$ (being in $\prd$), we have to show only that
\be\label{4.1}
\|S_{n+k}-S_{n+k-1}\|_{C(A)}\le c_2(r,d)\Psi_r(n+k-1).
\ee
In each cube of the $2^{r(k-1)}$ cubes (whose union is $A$) defining $S_{n+k-1}$, the difference $S_{n+k}-S_{n+k-1}$ satisfies the conditions about $g$ of Lemma~\ref{lm3.1} with $h=2^{-n-k+1}$ at points $\k 2^{-n}+\v (r-1) 2^{-n-k+1} + \w 2^{-n-k+1}$ in $A$. Therefore, using Lemma~\ref{lm3.1}, we have~\eqref{4.1}.

For $r=1$ we start with the cube $\k 2^{-n}+2^{-n+1}\tilde I^d\equiv \tilde A\subset I^d$ and then define $S_n$ on $\tilde A$ as the constant (element of $\pod$) of the value of $f(\k 2^{-n})$, that is the value of $f(\x)$ at the point where all coordinates of the cube are smallest. Divide $\tilde A$ into $2^r$ cubes of the same nature and define $S_{n+1}$ as the corresponding constant in each cube and so on. Using Lemma~\ref{lm3.2}, $|S_{n+k}(\x)-S_{n+k-1}(\x)|\le\Psi_1(n+k)$. We observe that $S_n(\u)-S_n(\u+\ttt)=0$ on $\tilde A$ for $\ttt=(\tau_1,\dots,\tau_d)$ where $0<\tau_i<2^{-n}$, and as $\|S_m-f\|_{L_\infty(\tilde A)}\to0$, \eqref{1.4} is valid for $r=1$. In fact, for $r=1$ we have the better estimate~\eqref{1.55}.

To prove~\eqref{1.5} we use the result from~\cite[p.~617, (4.2)]{Di84} in the following equivalent form:
\[
\omega^r(f,t)\le c\Bigl(t^r\|f\|+\sum_{l=0}^{n_0}2^{-rk}\omega^r_{\ei}(f,2^lt)\Bigr)
\]
where
\[
\omega^r_\ei(f,u)=\mathop{\sup_{\x,\x+rh\ei\in I}}_{0<h\le u}|\Delta^r_{h\ei}f(\x)|.
\]
For $2^{-n-1}<t\le 2^{-n}$ the sum stops at $r2^{l-n-1}>1$. Therefore, for $2^{-n-1}<t\le 2^{-n}$
\begin{align*}
\omega^r(f,t) &\le  c \Bigl(t^r\|f\|+\sum_{l=0}^{n_0}2^{-rl}\sum_{k=0}^\infty\Psi_r(n-l+k)\Bigr) \\
&\le  c_1 \Bigl(t^r\|f\|+\sum_{k=0}^\infty \Psi_r(n+k) + \sum_{l=0}^{n_0}2^{-rl}\Psi_r(n-l)\Bigr).\qedhere
\end{align*}
\end{proof}

\section{Remarks and corollaries}
In this section we make some additional remarks about and conclusions of the result of our paper. First we obtain the saturation result.
\begin{theorem}
Suppose $f\in C(I^d)$ and $\Psi_r(n)=o(2^{-nr})$, $n\to\infty$ for $\Psi_r(n)$ of~\eqref{1.3}. Then $\Psi_r(n)=0$ and $f\in\prd$.
\end{theorem}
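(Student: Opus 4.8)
The plan is to feed the hypothesis into Theorem~\ref{thm1.1}, extract from~\eqref{1.4} a one–dimensional ``little $o$'' statement in each coordinate direction, invoke the classical univariate saturation theorem on every line parallel to an axis, and finally assemble the resulting coordinatewise polynomials into a single element of $\prd$. Once $f\in\prd$ the equality $\Psi_r(n)=0$ will be immediate.

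First I would derive a directional estimate. If $\Psi_r(n)=o(2^{-nr})$, then for every $\eps>0$ there is an $N$ with $\Psi_r(m)\le\eps\,2^{-mr}$ for all $m\ge N$, and summing the geometric majorant gives $\sum_{k=0}^\infty\Psi_r(n+k)\le\eps\,2^{-nr}\,\frac{2^r}{2^r-1}$ for $n\ge N$. Feeding this into~\eqref{1.4} and choosing, for a given small $t$, the integer $n\ge1$ with $2^{-n-1}<t\le 2^{-n}$ (so that $2^{-nr}<2^rt^r$ and $n\to\infty$ as $t\to0^+$), I obtain
\[
\bigl|\Delta^r_{t\ei}f(\u)\bigr|\le c(r,d)\,\eps\, t^r\qquad\text{for all }\u,\u+rt\ei\in I^d
\]
once $t$ is small enough; since $\eps>0$ is arbitrary this says $\omega^r_\ei(f,t)=o(t^r)$ as $t\to0^+$ for every $i=1,\dots,d$ (for $r=1$ one argues the same way from~\eqref{1.55}).

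Next, fix $i$ and fix the coordinates $x_j$ for $j\ne i$; the restriction $g(x_i)=f(x_1,\dots,x_d)$ is a continuous function of one variable on $[0,1]$ whose $r$-th modulus of smoothness is $o(t^r)$, so by the classical one–dimensional $o$-saturation theorem for $\omega^r$ we get $g\in\pro$. Hence $f$ is a polynomial of degree $<r$ in $x_i$ for each fixed value of the remaining variables, and this holds for every $i$. It remains to upgrade ``coordinatewise of degree $<r$ in each variable'' to ``$f\in\prd$'', which I would do by induction on $d$ (the case $d=1$ being trivial): picking distinct nodes $\xi_0,\dots,\xi_{r-1}\in[0,1]$ and using the exact Lagrange representation $f(\x)=\sum_{k=0}^{r-1}f(x_1,\dots,x_{d-1},\xi_k)\,\ell_k(x_d)$ in the last variable, each coefficient $(x_1,\dots,x_{d-1})\mapsto f(x_1,\dots,x_{d-1},\xi_k)$ is continuous and is coordinatewise a polynomial of degree $<r$ in $d-1$ variables, hence lies in $\pp_{r,d-1}$ by the inductive hypothesis; since $\prd$ is a linear space containing every product of a $\pp_{r,d-1}$ function of $(x_1,\dots,x_{d-1})$ with a degree-$(r-1)$ polynomial in $x_d$, it follows that $f\in\prd$.

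Finally, since an $f\in\prd$ has degree $<r$ in the $i$-th variable, $\Delta^r_{h\ei}f\equiv0$ for every $h>0$ and every admissible base point; in particular $\Delta^r_{2^{-n}\ei}f(2^{-n}\k)=0$ for all $n$ and $\k$, so $\Psi_r(n)=0$. The only genuinely non-routine ingredient is the univariate $o$-saturation theorem used in the middle step; the passage from the dyadic-sum bound of~\eqref{1.4} to an $o(t^r)$ estimate valid for all $t>0$ (not only powers of $2$), and the interpolation step that glues the one-dimensional slices together, must be carried out with a little care but are otherwise elementary.
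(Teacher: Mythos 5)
Your proof is correct, but it takes a genuinely different route from the paper's. The paper works entirely at the discrete level first: fixing $m$ and choosing $n>m$ with $\Psi_r(n)\le\eps 2^{-nr}$, it writes a coarse difference $\Delta^r_{2^{-m}\ei}f(2^{-m}\k)$ as a sum of $2^{(n-m)r}$ fine differences based at points of the level-$n$ mesh (which refines the level-$m$ mesh), getting $|\Delta^r_{2^{-m}\ei}f(2^{-m}\k)|\le 2^{(n-m)r}\eps 2^{-nr}=\eps 2^{-mr}$ and hence $\Psi_r(m)=0$ exactly for every $m$; only then does it invoke~\eqref{1.4}, whose right-hand side is now zero, to kill all axis-direction differences and conclude $f\in\prd$. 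You instead pass through the continuous modulus: you sum the geometric majorant, feed it into~\eqref{1.4} to get $\omega^r_{\ei}(f,t)=o(t^r)$, invoke the classical univariate $o$-saturation theorem on each axis-parallel line, and assemble via Lagrange interpolation in the last variable; $\Psi_r(n)=0$ then falls out at the end rather than being established first. Both arguments are sound. The paper's is shorter and self-contained (the external saturation theorem you cite is itself proved by essentially the same doubling argument, $\omega^r(f,2t)\le 2^r\omega^r(f,t)$ combined with $\omega^r(f,t)/t^r\to0$, that the paper runs directly on the dyadic mesh), while your version makes explicit the final gluing step --- that a continuous function which is separately of degree $<r$ in each variable lies in $\prd$ --- which the paper leaves implicit in the phrase ``the inequality~\eqref{1.4} completes the proof.''
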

\begin{proof}
If $|\Delta^r_{2^{-n}\ei}f(2^{-n}\k)|\le\eps_n 2^{-nr}$, $\eps_n\to0$, then for every $\eps>0$ and a fixed $m$ there is an integer $n$, $n>m$, such that $|\Delta^r_{2^{-n}\ei}f(2^{-n}\k)|\le\eps 2^{-nr}$ for all $\k$. Therefore, $|\Delta^r_{2^{-m}\ei}f(2^{-m}\k)|\le\eps 2^{-nr}2^{(n-m)r}=\eps 2^{-mr}$. As $\eps>0$ is arbitrary, $\Psi_r(m)=0$. The inequality~\eqref{1.4} completes the proof.
\end{proof}

\begin{remark}
For $f\in\prd$, $\Psi_r(n)=0$ but $\omega^r(f,t)$ may behave like $t^r$ times the $r$-th derivative in the direction $\e$ which is different from any $\ei$. This shows that the term $t^r\|f\|$ in~\eqref{1.5} is not redundant.
\end{remark}

As corollaries of Theorem~\ref{thm1.1}, we also have:

\begin{corollary}
If $\Psi_r(n)=O(2^{-n\alpha})$ for $0<\alpha\le r$, then~\eqref{1.4} implies $|\Delta^r_{t\ei} f(\x)|=O(t^\alpha)$. If $\Psi_r(n)=O(2^{-n\alpha})$ for $0<\alpha<r$, \eqref{1.5} implies $\omega^r(f,t)=O(t^\alpha)$. However, for $\alpha=r$, $d>1$ and $r>1$, examples in~\cite[p.~620]{Di84} show that $\omega^r(f,t)$ may behave like $t^r|\log t|$ and hence the extra terms in~\eqref{1.5} are not redundant. For $r=1$, $\Psi_r(n)=O(2^{-n\alpha})$ with $0<\alpha\le1$ implies $\omega(f,t)=O(t^\alpha)$ using~\eqref{1.55}.
\end{corollary}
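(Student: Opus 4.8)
The plan is to insert the hypothesis $\Psi_r(n)\le M\,2^{-n\alpha}$ into the estimates of Theorem~\ref{thm1.1} and to sum the resulting geometric series. I would first treat the directional bound through~\eqref{1.4}. Since $\alpha>0$,
\[
\sum_{k=0}^\infty\Psi_r(n+k)\le M\sum_{k=0}^\infty 2^{-(n+k)\alpha}=\frac{M}{1-2^{-\alpha}}\,2^{-n\alpha},
\]
so~\eqref{1.4} gives $\bigl|\Delta^r_{t\ei}f(\u)\bigr|\le M_1(r,d)\,M\,(1-2^{-\alpha})^{-1}\,2^{-n\alpha}$ whenever $\u,\u+rt\ei\in I^d$ and $t\le2^{-n}$. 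Choosing $n$ to be the largest integer with $t\le2^{-n}$ forces $2^{-n}<2t$, hence $2^{-n\alpha}<2^\alpha t^\alpha$, and therefore $\bigl|\Delta^r_{t\ei}f(\u)\bigr|=O(t^\alpha)$; equivalently $\omega^r_{\ei}(f,t)=O(t^\alpha)$. This used only $\alpha>0$, so it holds for every $0<\alpha\le r$.

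For the full modulus with $0<\alpha<r$ I would use the inequality
\[
\omega^r(f,t)\le c\Bigl(t^r\|f\|+\sum_{l=0}^{n_0}2^{-rl}\,\omega^r_{\ei}(f,2^lt)\Bigr),
\]
already invoked in the proof of~\eqref{1.5}. For $2^{-n-1}<t\le2^{-n}$ we have $2^lt\le2^{-(n-l)}$, so by monotonicity of $\omega^r_{\ei}(f,\cdot)$ and the previous paragraph $\omega^r_{\ei}(f,2^lt)\le C\,2^{-(n-l)\alpha}$, whence
\[
\sum_{l=0}^{n_0}2^{-rl}\,\omega^r_{\ei}(f,2^lt)\le C\,2^{-n\alpha}\sum_{l=0}^{n_0}2^{l(\alpha-r)}\le\frac{C}{1-2^{\alpha-r}}\,2^{-n\alpha},
\]
the series converging precisely because $\alpha<r$. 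Since $t^r\|f\|\le t^\alpha\|f\|$ for $0<t\le1$ and $2^{-n\alpha}<2^\alpha t^\alpha$, this gives $\omega^r(f,t)=O(t^\alpha)$. For $r=1$ the same conclusion, now for every $0<\alpha\le1$, follows immediately from~\eqref{1.55}: $\omega(f,t)\le M\sum_{k\ge0}\Psi_1(n+k)\le CM\,2^{-n\alpha}=O(t^\alpha)$, with no auxiliary terms and no logarithm.

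The only delicate case is the borderline $\alpha=r$ with $d>1$, $r>1$: then the ratio $2^{\alpha-r}$ of the geometric progression above equals $1$, so $\sum_{l=0}^{n_0}2^{l(\alpha-r)}=n_0+1\sim|\log_2 t|$, and the argument yields only $\omega^r(f,t)=O\bigl(t^r|\log t|\bigr)$. To show this loss is intrinsic rather than a weakness of the proof I would appeal to the examples of~\cite[p.~620]{Di84}: these provide $f\in C(I^d)$ smooth along each coordinate axis, so that $\Psi_r(n)=O(2^{-nr})$, yet with $\omega^r(f,t)$ of exact order $t^r|\log t|$; such an $f$ shows simultaneously that the case $\alpha=r$ cannot be strengthened to $O(t^r)$ and that the middle sum in~\eqref{1.5} is indispensable. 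All the remaining steps are elementary summations, so apart from tracking the dichotomy $\alpha<r$ versus $\alpha=r$ there is no genuine obstacle beyond Theorem~\ref{thm1.1} itself.
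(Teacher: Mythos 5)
Your proof is correct and is essentially the argument the paper intends (the corollary is stated without proof there): substitute $\Psi_r(n)=O(2^{-n\alpha})$ into \eqref{1.4}, \eqref{1.5} and \eqref{1.55}, sum the geometric series, and observe that the dichotomy $\alpha<r$ versus $\alpha=r$ is governed by the ratio $2^{\alpha-r}$. The only divergence is that for the second claim you rerun the Marchaud-type inequality with the directional bound $\omega^r_{\ei}(f,u)=O(u^\alpha)$ rather than substituting into \eqref{1.5} verbatim; this reproduces the computation in the paper's proof of \eqref{1.5} and, incidentally, uses the factor $2^{-kr}$ appearing there rather than the $2^{kr}$ printed in the statement of \eqref{1.5}, which as printed would not sum to $O(2^{-n\alpha})$.
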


\begin{corollary}
If $\Psi_r(n+1)<\lambda\Psi_r(n)$ for some $\lambda<1$ and all $n$, then~\eqref{1.4} implies $|\Delta^r_{t\ei}f(\x)|\le c\Psi_r(n)$ when $2^{-n-1}\le t\le 2^{-n}$. If in addition to the above, we have $\Psi_r(n-1)\le \mu\Psi(n)$ with $\mu<2^r$, then using~\eqref{1.4} and~\eqref{1.5}, $\omega^r(f,t)\le c\Psi_r(n)$ when $2^{-n-1}\le t\le 2^{-n}$.
\end{corollary}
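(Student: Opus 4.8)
The plan is to reduce both assertions to elementary summation of geometric series on the right-hand sides of \eqref{1.4} and \eqref{1.5}.

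For the first assertion, iterating $\Psi_r(n+1)<\lambda\Psi_r(n)$ gives $\Psi_r(n+k)<\lambda^k\Psi_r(n)$ for every $k\ge0$, hence $\sum_{k=0}^\infty\Psi_r(n+k)<(1-\lambda)^{-1}\Psi_r(n)$. Since $2^{-n-1}\le t\le2^{-n}$ in particular gives $t\le2^{-n}$, inequality \eqref{1.4} applies and yields $|\Delta^r_{t\ei}f(\u)|\le M_1(r,d)(1-\lambda)^{-1}\Psi_r(n)$, which is the claim with $c=M_1(r,d)(1-\lambda)^{-1}$.

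For the second assertion I would bound the three terms on the right of \eqref{1.5} separately. The first sum is again at most $(1-\lambda)^{-1}\Psi_r(n)$. For the second sum, iterating the extra hypothesis $\Psi_r(n-1)\le\mu\Psi_r(n)$ downward gives $\Psi_r(n-k)\le\mu^k\Psi_r(n)$; since the $k$-th summand is controlled by $2^{-kr}\Psi_r(n-k)$ (cf.\ the computation in the proof of \eqref{1.5}), that sum is dominated by $\Psi_r(n)\sum_{k\ge1}(\mu2^{-r})^k$, a geometric series convergent uniformly in $n$ (equivalently in $n_0$) precisely because $\mu<2^r$.

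The remaining term $t^r\|f\|$ is the only point needing an idea. Because $\Psi_r\ge0$ while $\Psi_r(n+1)<\lambda\Psi_r(n)$, necessarily $\Psi_r(n)>0$ for every admissible $n$; iterating $\Psi_r(n-1)\le\mu\Psi_r(n)$ down to the least admissible index $n_*$ gives $\Psi_r(n)\ge\mu^{-(n-n_*)}\Psi_r(n_*)\ge c_f\,2^{-nr}$ with $c_f:=\mu^{n_*}\Psi_r(n_*)>0$ depending only on $f$ (again using $\mu<2^r$). Hence $t^r\|f\|\le2^{-nr}\|f\|\le(\|f\|/c_f)\Psi_r(n)$, and adding the three estimates in \eqref{1.5} gives $\omega^r(f,t)\le c\Psi_r(n)$ for $2^{-n-1}<t\le2^{-n}$; the left endpoint $t=2^{-n-1}$ is absorbed by applying \eqref{1.5} at $n+1$ together with $\Psi_r(n+1)<\lambda\Psi_r(n)$, or simply by monotonicity of $\omega^r$. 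Thus the only genuine obstacle is handling the $t^r\|f\|$ term; everything else is routine bookkeeping with geometric series.
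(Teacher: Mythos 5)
Your proof is correct and is exactly the derivation the corollary's wording intends (the paper offers no further argument beyond ``using \eqref{1.4} and \eqref{1.5}''): geometric summation disposes of the two sums, and your observation that the strict inequality forces $\Psi_r(n)>0$, whence $\Psi_r(n)\ge c_f2^{-nr}$ with $c_f>0$ depending on $f$, is the right---and only---way to absorb $t^r\|f\|$, since the remark that this term is not redundant (add a large constant to $f$: $\|f\|$ changes, $\Psi_r$ does not) shows the constant $c$ must depend on $f$. You were also right to read the middle sum in \eqref{1.5} with the exponent $2^{-kr}$ appearing in the proof of Theorem~\ref{thm1.1} rather than the $2^{kr}$ printed in its statement; with the printed exponent the summands would be bounded only by $(2^r\mu)^k\Psi_r(n)$ and the derivation of the second assertion would break down.
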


\begin{remark}
One can replace $I^d$ by $\R^d$ or $\R_+^d$ in~\eqref{1.1} and in Theorem~\ref{thm1.1}, with an almost identical proof. The only difference is that the choice of the basic cube $A$ in the proof of Theorem~\ref{thm1.1} is even easier, as one can simply take $k_j=[2^n u_j]$ to ensure that $A\equiv \k2^{-n}+2(r-1)2^{-n}I^d$ contains both $\u$ and $\u+rt\ei$, while $A\subset \R^d$ or $A\subset\R_+^d$ respectively. The rest of the proof is the same, as it is concerned only with $A$.
\end{remark}

\begin{remark}
One cannot expect to derive bounds on moduli of smoothness in $L_p$ from the values on diadic mesh only, as the measure of all the points of the mesh is zero. One will need to impose severe additional conditions on the function, or to use different data related to the function, perhaps averages over small cubes.
\end{remark}

\providecommand{\bysame}{\leavevmode\hbox to3em{\hrulefill}\thinspace}
\providecommand{\MR}{\relax\ifhmode\unskip\space\fi MR }
\providecommand{\MRhref}[2]{%
  \href{http://www.ams.org/mathscinet-getitem?mr=#1}{#2}
}
\providecommand{\href}[2]{#2}

\end{document}